\newtheorem{thm}{Theorem}
\newtheorem{lemma}[thm]{Lemma}
\theoremstyle{remark}
\theoremstyle{definition}
\newcommand{\subalign}[1]{%
  \vcenter{%
    \Let@ \restore@math@cr \default@tag
    \baselineskip\fontdimen10 \scriptfont\tw@
    \advance\baselineskip\fontdimen12 \scriptfont\tw@
    \lineskip\thr@@\fontdimen8 \scriptfont\thr@@
    \lineskiplimit\lineskip
    \ialign{\hfil$\m@th\scriptstyle##$&$\m@th\scriptstyle{}##$\crcr
      #1\crcr
    }%
  }
}
\def\FF{\mathbb{F}}
\def\gal{\mathrm{Gal}}
\begin{document}

\title{Normal Elements in Finite Fields}

\author{Trevor Hyde}
\address{Dept. of Mathematics\\
University of Michigan \\
Ann Arbor, MI 48109-1043\\
}
\email{tghyde@umich.edu}

\date{September 6th, 2018}

\maketitle

If $L/K$ is a finite Galois field extension with Galois group $G$, then $\alpha \in L$ is called a \textbf{normal element} if the $G$-orbit of $\alpha$ forms a basis of $L$ as a vector space over $K$. The normal basis theorem \cite[Thm. 13.1]{Lang} asserts that every finite Galois extension has a normal element.

If $\FF_{q^n}/\FF_q$ is an extension of finite fields, then there are finitely many normal elements in $\FF_{q^n}$. We give a simple proof of a formula for $N_n(q)$ the number of normal elements in $\FF_{q^n}/\FF_q$.

\begin{thm}
\label{thm main}
Let $p = \mathrm{char}(\FF_q)$ and write $n = dp^m$ where $p$ does not divide $d$. For $e$ coprime to $q$ let $o_e(q)$ denote the multiplicative order of $q$ modulo $e$, and let $\varphi$ be the Euler totient function. Then
\[
    N_n(q) = q^n\prod_{e\mid d}\left(1 - \frac{1}{q^{o_e(q)}}\right)^{\varphi(e)/o_e(q)}.
\]
\end{thm}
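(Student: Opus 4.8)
The plan is to realize $\FF_{q^n}$ as a module over the group algebra $R := \FF_q[x]/(x^n - 1)$, where $x$ acts via the Frobenius automorphism $\sigma\colon \beta \mapsto \beta^q$. Since $\sigma$ has order $n$ and generates $\gal(\FF_{q^n}/\FF_q)$, this action is well-defined, and the normal basis theorem says precisely that $\FF_{q^n}$ is free of rank one over $R$: a normal element $\alpha$ has orbit $\{\sigma^i\alpha\} = \{x^i \cdot \alpha\}$ forming an $\FF_q$-basis, so $r \mapsto r\cdot\alpha$ gives an isomorphism $R \cong \FF_{q^n}$. Under such an isomorphism, $\alpha$ is normal exactly when its $\sigma$-orbit spans $\FF_{q^n}$, i.e. when it generates $\FF_{q^n}$ as an $R$-module, which happens if and only if the corresponding element of $R$ is a unit. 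Thus $N_n(q) = |R^\times|$, and the problem reduces to counting units in $R$.

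To count units, I would factor $x^n - 1$ over $\FF_q$. Writing $n = dp^m$ with $p \nmid d$, the Frobenius identity $(x^d - 1)^{p^m} = x^n - 1$ reduces the problem to factoring the separable polynomial $x^d - 1 = \prod_{e \mid d} \Phi_e(x)$. Over $\FF_q$ each cyclotomic polynomial $\Phi_e$ splits into $\varphi(e)/o_e(q)$ distinct irreducible factors, each of degree $o_e(q)$. The Chinese Remainder Theorem then decomposes $R$ as a product of local rings $\FF_q[x]/(f^{p^m})$, one for each irreducible factor $f$ of $x^d - 1$, with the multiplicity $p^m$ coming from the Frobenius power.

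Each such local ring $S = \FF_q[x]/(f^{p^m})$ with $\deg f = o_e(q) =: r$ has maximal ideal $(f)$ and residue field $\FF_{q^r}$, so its units are exactly the elements outside the maximal ideal. Counting gives $|S^\times| = |S| - |S|/q^r = q^{rp^m}\bigl(1 - q^{-r}\bigr)$. Taking the product over all factors and grouping by $e$ yields
\[
    N_n(q) = \prod_{e \mid d} \left(q^{o_e(q)p^m}\bigl(1 - q^{-o_e(q)}\bigr)\right)^{\varphi(e)/o_e(q)}.
\]
The $q$-power contribution simplifies via $\sum_{e \mid d}\varphi(e) = d$ to the overall factor $q^{dp^m} = q^n$, recovering the claimed formula.

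I do not anticipate a serious obstacle: once the module-theoretic reformulation is in place, the computation is a routine application of the Chinese Remainder Theorem together with the structure of finite local rings. The single conceptual step driving everything is the identification of normal elements with the $R$-module generators, hence with the units of $R$; the rest is bookkeeping of the factorization of $x^n-1$ and an elementary count.
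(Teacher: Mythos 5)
Your proposal is correct and follows essentially the same route as the paper: identifying normal elements with the units of $R = \FF_q[x]/(x^n-1)$ is precisely the cyclic case of the paper's torsor theorem (Theorem \ref{lem torsor}), and your factorization of $x^n - 1 = (x^d-1)^{p^m}$ into cyclotomic pieces with $\varphi(e)/o_e(q)$ irreducible factors of degree $o_e(q)$ matches the paper's Frobenius-orbit computation. The only cosmetic difference is that you count units via an explicit Chinese Remainder Theorem decomposition into local rings $\FF_q[x]/(f^{p^m})$, whereas the paper's Lemma \ref{lem phi} derives the same product formula by an independence argument; the two counts are equivalent.
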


Proofs of Theorem \ref{thm main} appear in \cite{Akbik}, \cite[Cor. 2.4.7]{Gao}, \cite[Sec. 1]{Lenstra}, and \cite[Chp. 1, Thm. 12]{Ore}. Ore \cite[Pg. 251]{Ore} attributes the problem of determining $N_n(q)$ to Eisenstein and the first complete solution to Hensel \cite{Hensel}. At the end of this note we discuss the relation between our proof and those mentioned above.

Our proof is based on Theorem \ref{lem torsor} which describes a general relation between normal elements and units in the Galois group ring. This result was communicated to us by O'Desky and Rosen \cite{Odesky}. While Theorem \ref{lem torsor} and its application to Theorem \ref{thm main} may be known to experts, we did not find this in the literature. The aim of this note is to derive Theorem \ref{thm main} with a simple, direct argument and to bring Theorem \ref{lem torsor} to a wider audience.

\begin{thm}[{\cite{Odesky}}]
\label{lem torsor}
If $L/K$ is a finite Galois extension with Galois group $G$, then the units $K[G]^\times$ in the group algebra for $G$ over $K$ act freely and transitively on the set of normal elements in $L/K$. In other words, the normal elements in $L/K$ form a torsor for $K[G]^\times$.
\end{thm}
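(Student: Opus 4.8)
The plan is to recast normality in module-theoretic terms. Extending the $G$-action on $L$ by $K$-linearity makes $L$ a left module over the group algebra $K[G]$, with $\big(\sum_{\sigma\in G} a_\sigma\sigma\big)\cdot\alpha = \sum_{\sigma\in G}a_\sigma\,\sigma(\alpha)$. To each $\alpha\in L$ I would attach the $K[G]$-linear evaluation map $\phi_\alpha\colon K[G]\to L$, $\phi_\alpha(x)=x\cdot\alpha$, whose image is exactly the $K$-span of the $G$-orbit of $\alpha$. The first and central step is the equivalence: $\alpha$ is a normal element \emph{if and only if} $\phi_\alpha$ is an isomorphism of $K[G]$-modules. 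Indeed, $\alpha$ is normal precisely when its orbit spans $L$ over $K$, i.e.\ when $\phi_\alpha$ is surjective; and since $\dim_K K[G]=|G|=[L:K]=\dim_K L$, surjectivity of $\phi_\alpha$ forces bijectivity. Thus the normal elements are exactly the free $K[G]$-module generators of $L$, and the normal basis theorem guarantees at least one exists, so $L\cong K[G]$ as left $K[G]$-modules.

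With this in hand, I would let $K[G]^\times$ act on $L$ by the restriction of the module action, $u\cdot\alpha$, and first verify this preserves normality: for a unit $u$, the map $x\mapsto xu$ is a bijection of $K[G]$, so $\phi_{u\cdot\alpha}(x)=(xu)\cdot\alpha=\phi_\alpha(xu)$ has the same image as $\phi_\alpha$, whence $u\cdot\alpha$ is again normal. Freeness is then immediate: if $u\cdot\alpha=\alpha=1\cdot\alpha$ for a normal $\alpha$, injectivity of $\phi_\alpha$ gives $u=1$. For transitivity, given normal $\alpha,\beta$ I would use surjectivity of $\phi_\alpha$ and $\phi_\beta$ to pick $u,v\in K[G]$ with $u\cdot\alpha=\beta$ and $v\cdot\beta=\alpha$; then $(vu)\cdot\alpha=\alpha=1\cdot\alpha$ and $(uv)\cdot\beta=\beta=1\cdot\beta$, so injectivity of $\phi_\alpha$ and $\phi_\beta$ yields $vu=uv=1$. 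Hence $u\in K[G]^\times$ and $\beta=u\cdot\alpha$, completing transitivity.

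Conceptually this is the general fact that the generators of a free rank-one module form a torsor under the unit group of the ring, specialized to $L\cong K[G]$; I expect the only real care to lie in the bookkeeping forced by noncommutativity of $K[G]$ when $G$ is nonabelian. Concretely, the subtle points are (i) confirming that $\phi_\alpha$ is genuinely $K[G]$-linear and that the equivalence ``normal $\Leftrightarrow$ isomorphism'' survives the dimension count, and (ii) arranging the transitivity argument so that right multiplication by units---rather than an accidental appeal to commutativity---does the work. Once these are handled the argument is short, and it has the pleasant feature of invoking the normal basis theorem only to know the torsor is nonempty.
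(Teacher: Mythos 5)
Your proposal is correct and is essentially the paper's own argument in module-theoretic dress: your injectivity and surjectivity of the evaluation map $\phi_\alpha$ are exactly the paper's use of linear independence and spanning of the $G$-orbit (including the same dimension count and the same right-multiplication-by-$u$ trick for preserving normality), and your transitivity step --- producing $u,v$ with $(vu)\cdot\alpha=\alpha$ and $(uv)\cdot\beta=\beta$, then concluding $uv=vu=1$ --- coincides with the paper's computation verbatim. The framing of normal elements as free generators of $L\cong K[G]$, with the torsor structure coming from the general fact about rank-one free modules, is a pleasant conceptual gloss but not a different proof.
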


\begin{proof}
First suppose that $u \in K[G]^\times$ and $\alpha$ is a normal element. We claim that $\beta := u\alpha$ is a normal element. If for some $b_g \in K$
\[
    0 = \sum_{g\in G}b_g g\beta = \sum_{g\in G} b_g gu\alpha,
\]
then $\alpha$ normal implies that $\sum_{g\in G}b_g gu = 0$ in $K[G]$. Dividing by $u$ on the right gives $\sum_{g\in G}b_g g = 0$, hence $b_g = 0$ for all $g$. Thus the $G$-orbit of $\beta$ is linearly independent, hence $\beta$ is normal. Furthermore, by the normality of $\alpha$, we see that $u\alpha = \alpha$ implies $u = 1$. This shows that $K[G]^\times$ acts freely on the normal elements in $L/K$.

Next we show $K[G]^\times$ acts transitively on normal elements. Suppose that $\alpha$ and $\beta$ are both normal elements in $L/K$. Then for some $a_g, b_g \in K$ we have
\[
    \alpha = \sum_{g\in G} a_g g\beta \hspace{.75in} \beta = \sum_{g\in G}b_g g\alpha.
\]
If $u := \sum_{g\in G}a_g g$ and $v := \sum_{g\in G}b_g g$, then $\alpha = u\beta$ and $\beta = v\alpha$. Thus $uv\alpha = \alpha$ and $vu\beta = \beta$. Since $\alpha$ and $\beta$ are normal elements this implies that $uv = 1 = vu$, hence $u$ is a unit in $K[G]^\times$. Therefore every normal element $\beta$ is a $K[G]^\times$ multiple of $\alpha$.
\end{proof}

A weaker version of Theorem \ref{lem torsor} appears implicitly in Suwa \cite[Cor. 1.7]{Suwa} where it is traced back to an argument of Serre \cite[Chp. IV, Prop. 7]{Serre}. Their statement is an equivalence between the existence of a normal basis of a Galois algebra and of a certain pull-back diagram involving units in a group scheme associated to the Galois group ring.

Lemma \ref{lem phi} below is the function field analog of the classic formula for Euler's totient function
\[
    \varphi(n) = n\prod_{p\mid n}\left(1 - \frac{1}{p}\right),
\]
where the product on the right is taken over all prime divisors of $n$ without multiplicity. Note that $\varphi(n)$ may be defined as the number of multiplicative units modulo $n$. Lemma \ref{lem phi} is well-known, see \cite[Prop. 1.7]{Rosen} for example. We give a proof for completeness.

\begin{lemma}
\label{lem phi}
If $f(x) \in \FF_q[x]$ is non-constant, then
\[
    |(\FF_q[x]/(f))^\times| = q^{\deg(f)}\prod_{p(x) \mid f(x)}\left(1 - \frac{1}{q^{\deg(p)}}\right),
\]
where the product is taken over all monic irreducible factors of $f(x)$ without multiplicity.
\end{lemma}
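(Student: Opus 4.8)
The plan is to mimic the classical proof of Euler's formula via multiplicativity and the Chinese Remainder Theorem. First I would factor $f(x) = \prod_{i} p_i(x)^{e_i}$ into its distinct monic irreducible factors $p_i$ with multiplicities $e_i \geq 1$. Since the $p_i^{e_i}$ are pairwise coprime in the principal ideal domain $\FF_q[x]$, the Chinese Remainder Theorem gives a ring isomorphism $\FF_q[x]/(f) \cong \prod_i \FF_q[x]/(p_i^{e_i})$, and taking unit groups on both sides reduces the computation to the prime-power case, namely $|(\FF_q[x]/(f))^\times| = \prod_i |(\FF_q[x]/(p_i^{e_i}))^\times|$.

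Next I would compute the number of units in a single factor $R := \FF_q[x]/(p^e)$ for $p$ monic irreducible of degree $\delta := \deg(p)$. The ring $R$ is local with maximal ideal $\mathfrak{m} = (p)/(p^e)$: an element $g$ is a unit precisely when it is coprime to $p$, which happens if and only if it does not lie in $\mathfrak{m}$. Thus the number of units equals $|R| - |\mathfrak{m}|$. Counting by degrees, $R$ is represented by polynomials of degree $< e\delta$, so $|R| = q^{e\delta}$; likewise $\mathfrak{m}$ consists of the multiples $p\cdot h$ with $\deg h < (e-1)\delta$, so $|\mathfrak{m}| = q^{(e-1)\delta}$. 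Hence $|R^\times| = q^{e\delta} - q^{(e-1)\delta} = q^{e\delta}\bigl(1 - q^{-\delta}\bigr)$.

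Finally I would assemble the pieces. Multiplying over all $i$ gives
\[
    |(\FF_q[x]/(f))^\times| = \prod_i q^{e_i\deg(p_i)}\left(1 - \frac{1}{q^{\deg(p_i)}}\right) = q^{\sum_i e_i \deg(p_i)}\prod_i\left(1 - \frac{1}{q^{\deg(p_i)}}\right),
\]
and since $\deg(f) = \sum_i e_i\deg(p_i)$ the prefactor is $q^{\deg(f)}$, while the remaining product runs over the distinct irreducible factors exactly as claimed. The only point requiring a little care is the identification of the units of $R$ with the complement of $\mathfrak{m}$; I would justify this by observing that $R/\mathfrak{m} \cong \FF_q[x]/(p)$ is a field, so $R$ is local and its units are exactly the elements outside the unique maximal ideal, or equivalently by a B\'ezout relation $ag + bp = 1$ reduced modulo $p^e$. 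No step presents a genuine obstacle; the entire content is the multiplicativity supplied by the Chinese Remainder Theorem together with an elementary count of the local factors.
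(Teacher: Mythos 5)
Your proof is correct, but it takes a different route from the paper's. You run the classical argument: factor $f = \prod_i p_i^{e_i}$, apply the Chinese Remainder Theorem to split $\FF_q[x]/(f)$ into the local rings $\FF_q[x]/(p_i^{e_i})$, and count units in each local factor as the complement of the maximal ideal, getting $q^{e_i\deg(p_i)} - q^{(e_i-1)\deg(p_i)}$ --- the exact analog of $\varphi(p^k) = p^k - p^{k-1}$ followed by multiplicativity. The paper instead gives a probabilistic repackaging: it observes that the probability a uniformly random element of $\FF_q[x]/(f)$ is a unit equals $\varphi(f)/q^{\deg(f)}$, that being a unit is equivalent to avoiding divisibility by each irreducible $p \mid f$, that each such event has probability $1 - q^{-\deg(p)}$, and that these events are jointly independent by CRT; the product formula then falls out in one line without ever factoring $f$ into prime powers or computing units of the local rings. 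The trade-off: your version is fully self-contained and makes the local structure explicit (units are exactly the elements outside the unique maximal ideal, which you justify two ways --- locality of $R$ or a B\'ezout relation), while the paper's version is shorter and quietly absorbs the multiplicities $e_i$ into the uniformity of the reduction maps $\FF_q[x]/(f) \to \FF_q[x]/(p)$, which is the one point its proof leaves implicit and yours makes explicit. Both arguments rest on the same pillar, the Chinese Remainder Theorem, so the difference is one of packaging rather than substance, but your packaging is the standard one and the paper's is the slicker one.
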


\begin{proof}
Let $\varphi(f) := |(\FF_q[x]/(f))^\times|$. Then the probability of a uniformly random element of $\FF_q[x]/(f)$ being a unit is, on one hand, simply $\varphi(f)/q^{\deg(f)}$. On the other hand, $u(x) \in (\FF_q[x]/(f))^\times$ is equivalent to $u(x)$ not being divisible by any irreducible $p(x)$ dividing $f(x)$, and these events are independent by the Chinese Remainder Theorem. Hence
\[
    \frac{\varphi(f)}{q^{\deg(f)}} = \prod_{p(x) \mid f(x)}\left(1 - \frac{1}{q^{\deg(p)}}\right).\qedhere
\]
\end{proof}

\begin{proof}[Proof of Theorem \ref{thm main}]
The Galois group of $\FF_{q^n}/\FF_q$ is cyclic of order $n$ generated by the Frobenius auotomorphism $\sigma: a \mapsto a^q$. Therefore the group algebra $\FF_q[\langle\sigma\rangle]$ is naturally isomorphic to $\FF_q[x]/(x^n - 1)$ by $\sigma \mapsto x$. Theorem \ref{lem torsor} implies that the number of normal elements $N_n(q)$ is equal to the number of units in $\FF_q[\langle\sigma\rangle]$, hence by Lemma \ref{lem phi}
\begin{equation}
\label{eqn prod}
    N_n(q) = |(\FF_q[x]/(x^n-1))^\times| = q^n \prod_{p(x) \mid x^n - 1}\left(1 - \frac{1}{q^{\deg(p)}}\right).
\end{equation}
If $n = dp^m$ where $p$ does not divide $d$, then $x^n - 1 = (x^d - 1)^{p^m}$ in $\FF_q[x]$, hence the product \eqref{eqn prod} may be taken over irreducibles $p(x) \mid x^d - 1$. By Galois theory these irreducible factors correspond to orbits of Frobenius on the $d$th roots of unity. The orbit of a primitive $e$th root of unity has length $o_e(q)$, the multiplicative order of $q$ modulo $e$, and there are $\varphi(e)/o_e(q)$ such orbits. Hence
\[
    N_n(q) = q^n \prod_{e\mid d}\left(1 - \frac{1}{q^{o_e(q)}}\right)^{\varphi(e)/o_e(q)}.\qedhere
\]
\end{proof}

The proofs of Theorem \ref{thm main} in \cite{Akbik, Hensel, Lenstra, Ore} use additive polynomials to count normal elements in $\FF_{q^n}/\FF_q$. Recall that a polynomial $f(x) \in \FF_q[x]$ is \emph{additive} if $f(x + y) = f(x) + f(y)$ or equivalently if $f(x) = \sum_{i=0}^d a_i x^{q^i}$. Additive polynomials are an essentially positive characteristic phenomenon and thus this approach gives the impression that the enumeration in Theorem \ref{thm main} hinges on some special feature of positive characteristic fields. However, the ring of additive polynomials is isomorphic to the Galois group ring $\FF_q[\langle \sigma\rangle]$, and the latter generalizes to Galois extensions in any characteristic. The important underlying structure is the free transitive action of the units in the Galois group ring on normal elements (Theorem \ref{lem torsor}) and the structure of finite fields only comes in to count $(\FF_q[x]/(x^n-1))^\times$.

For example, if $L/K$ is any degree $n$ cyclic Galois extension, then each choice of normal element in $L/K$ and generator of the Galois group provides an explicit bijection between all normal elements and units in the algebra $K[x]/(x^n - 1)$, a product of cyclotomic extensions of $K$.

Ore \cite[Chp. 1]{Ore} proves Theorem \ref{thm main} by studying the minimal additive polynomial associated to each element of $\FF_{q^n}$ and observing that normal elements are precisely those whose minimal additive polynomial is $x^{q^n} - x$. He then uses an inclusion-exclusion argument to arrive at the product formula \eqref{eqn prod}. Akbik \cite{Akbik} independently came to essentially the same proof nearly 60 years later. Lenstra and Schoof \cite[Sec. 1]{Lenstra} give an exposition of Ore's proof in their work on primitive normal bases; the ideas behind our proof appear there between the lines. They describe Ore's results as pertaining to the Galois module structure of $\FF_{q^n}/\FF_q$, but neither explicitly state that the ring of additive polynomials is the group ring of $\gal(\FF_{q^n}/\FF_q)$ nor refer to Theorem \ref{lem torsor} directly. They do allude to the general connection between normal elements and units in the Galois group ring in their assertion \cite[(1.15) Pg. 221]{Lenstra}.

Perlis \cite[Lem. 1]{Perlis} gives a criterion for an element in $\FF_{q^n}$ to generate an normal basis which is equivalent to Ore's characterization in terms of minimal additive polynomials. Gao \cite[Pg. 18]{Gao} notes that Perlis's result can be used to count the number normal elements. Perlis \cite[Thm. 1]{Perlis} also shows that normal elements may be detected by the non-vanishing of their trace, and Gao \cite[Cor. 2.4.7]{Gao} uses this to give another enumeration of normal elements.

\subsection*{Acknowledgments}
We thank Andrew O'Desky for sharing Theorem \ref{lem torsor} and clarifying its relation to the work of Suwa and Serre. We thank Julian Rosen and Mike Zieve for bringing references to our attention, in particular \cite{Gao, Lenstra, Ore, Perlis}. We thank Bob Lutz and Andrew O'Desky for comments on an earlier draft. Finally we are grateful to Jeff Lagarias for helpful feedback and encouragement.

\end{document}